\newtheorem{theorem}{Theorem}
\newtheorem{proposition}[theorem]{Proposition}
\renewcommand{\arraystretch}{1.2}
\begin{document}

\title{Enumeration on graph mosaics}

\author[K. Hong]{Kyungpyo Hong}
\address{National Institute for Mathematical Sciences, Daejeon 34047, Korea}
\email{kphong@nims.re.kr}
\author[S. Oh]{Seungsang Oh}
\address{Department of Mathematics, Korea University, Seoul 02841, Korea}
\email{seungsang@korea.ac.kr}

\thanks{Mathematics Subject Classification 2010: 05C30, 57M25, 81P99}
\thanks{The corresponding author(Seungsang Oh) was supported by the National Research Foundation of Korea(NRF) grant funded by the Korea government(MSIP) (No. NRF-2014R1A2A1A11050999).}

\begin{abstract}
Since the Jones polynomial was discovered, 
the connection between knot theory and quantum physics has been of great interest.
Lomonaco and Kauffman introduced the knot mosaic system to give a definition of the quantum knot system
that is intended to represent an actual physical quantum system.
Recently the authors developed an algorithm producing the exact enumeration of knot mosaics, 
which uses a recursion formula of state matrices.
As a sequel to this research program,
we similarly define the (embedded) graph mosaic system by using sixteen graph mosaic tiles,
representing graph diagrams with vertices of valence 3 and 4.
And we extend the algorithm to produce the exact number of all graph mosaics.
The magnified state matrix that is an extension of the state matrix is mainly used.
\end{abstract}

\maketitle

\section{Introduction and terminology}

In 1983, Vaughan Jones discovered the famous Jones polynomial
that was an essentially new way of studying knots.
It turned out that the explanation of the Jones polynomial has to do with quantum theory.
The interested reader is referred to other articles \cite{J1, J2, K1, K2, L, LK2, SJ}.
Lomonaco and Kauffman introduced a knot mosaic system to set the foundation for a quantum knot system
in the series of papers \cite{LK1, LK3, LK4, LK5}.
Their definition of quantum knots 
whose basic building block is a mosaic system,
was based on the planar projections of knots and the Reidemeister moves.
They model the topological information in a knot by a state vector in a Hilbert space
that is directly constructed from knot mosaics.

The authors, Lee and Lee have announced
the progress of the research program on finding the total number of knot mosaics
in the series of papers \cite{HLLO1, HLLO2, LHLO, Oh1, OHLL}.
We have developed a partition matrix argument to count small knot mosaics,
and also generalized this argument to give an algorithm for counting all knot mosaics
that uses recurrence relations of so-called state matrices.
This algorithm is further generalized 
for the enumeration of finite two-dimensional regular lattice objects
such as independent vertex sets \cite{OhV1},
which is an outstanding unsolved combinatorial problem in statistical mechanics.

The purpose of this paper is to present a combinatorial extension of 
the definition of knot mosaics to graph theory,
and an application of the algorithm for the enumeration of embedded graph mosaics.
Throughout this paper, a {\em graph mosaic\/} means a mosaic regular projection of 
embedded graphs (possibly multiple components) with vertices of valence 3 and 4 in 3-space
$\mathbb{R}^3$.
The requirement on the number of valence is necessary to superpose in mosaic shapes.

Let $\mathbb{T}_G$ denote the set of the following sixteen symbols that are 
called {\em graph mosaic tiles\/} as in Figure~\ref{fig1}.
This definition is an extended version of the definition of mosaic tiles for knots \cite{LK3}, 
which are the first eleven tiles $T_0$ through $T_{10}$.
The other five tiles are used to represent the vertices of valence 3 and 4 of a graph.

\begin{figure}[h]
\includegraphics{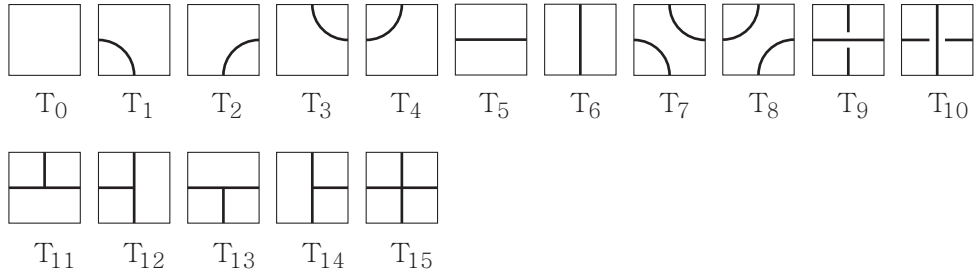}
\caption{Sixteen graph mosaic tiles}
\label{fig1}
\end{figure}

For positive integers $m$ and $n$,
an {\em $(m,n)$--mosaic\/} is an $m \times n$ matrix $M=(M_{ij})$ of graph mosaic tiles.
Note that the set of all $(m,n)$--mosaics has $16^{mn}$ elements.
A {\em connection point\/} of a graph mosaic tile is defined as the midpoint of a tile edge
that is also the endpoint of a portion of a graph drawn on the tile.
Note that graph mosaic tile $T_0$ has no connection point, $T_1$ through $T_6$ have two,
$T_{11}$ through $T_{14}$ have three, and $T_7$ through $T_{10}$ and $T_{15}$ have four.
A mosaic is called {\em suitably connected\/} if any pair of graph mosaic tiles
lying immediately next to each other in either the same row or the same column
have or do not have connection points simultaneously on their common edge.

A {\em graph $(m,n)$--mosaic\/} is a suitably connected $(m,n)$--mosaic
without connection points on the boundary.
Then this graph $(m,n)$--mosaic represents a specific graph diagram having vertices of valence 3 and 4.
Three figures in Figure \ref{fig2} represent a $(4,3)$--mosaic, a graph $(5,4)$--mosaic 
and the trefoil knot $(4,4)$--mosaic.

\begin{figure}[h]
\includegraphics{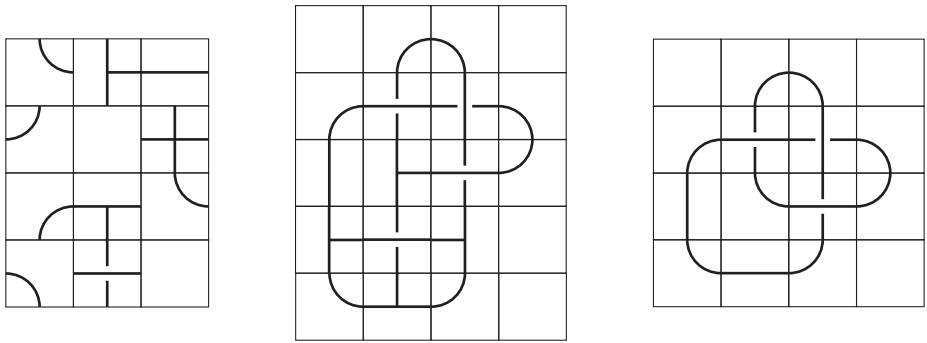}
\caption{Examples of mosaics}
\label{fig2}
\end{figure}

One of the natural problems in studying mosaic theory is the enumeration of certain type of mosaics.
$D^{(m,n)}_G$ denotes the total number of graph $(m,n)$--mosaics.
Remark that equivalent embedded graphs are counted as distinct 
if they are represented as mosaics in distinct ways. 
Originally, Lomonaco and Kauffman proposed the problem finding the number of all knot mosaics \cite{LK3}.
Indeed the total number of knot mosaics indicates the dimension of the Hilbert space of
so-called quantum knot mosaics.
In this paper, we present an algorithm producing the exact value of $D^{(m,n)}_G$.
Let $b(i,j)$ be the sum of the Hamming weights of the binary expansions of integers $i \! - \! 1$ and $j \! - \! 1$.
The Hamming weight of the binary number is the number of 1's in the string.
For example, $b(12,15) = 6$ because the binary expansions of 11 and 14 are 1011 and 1110.
Denote that $I_k$ is the identity matrix of size $2^k \times 2^k$
and $L_k=\frac{(1+\sqrt{5})^k+(1-\sqrt{5})^k}{2^k}$ is the Lucas number.

\begin{theorem} \label{thm:graph}
For positive integers $m$ and $n$, the total number $D^{(m+2,n+2)}_G$ of
all graph $(m \! + \! 2,n \! + \! 2)$--mosaics is  
$$D^{(m+2,n+2)}_G = \sum_{i, \, j=1}^{2^{m+n}} L_{b(i,j)} \, x_{ij},$$
where $x_{ij}$ is the $(i,j)$--entry of
the $2^{m+n} \times 2^{m+n}$ magnified state matrix $\widehat{N}^{(m,n)}_G$ recursively obtained by  
$$\widehat{N}^{(m,k+1)}_G =
\begin{bmatrix} \widehat{N}^{(m,k)}_G \cdot (I_k \otimes X_m^+) & 
\widehat{N}^{(m,k)}_G \cdot (I_k \otimes O_m^-) \\
\widehat{N}^{(m,k)}_G \cdot (I_k \otimes X_m^-) & 
\widehat{N}^{(m,k)}_G \cdot (I_k \otimes O_m^+) \end{bmatrix} $$
for $k=0,1,\dots, n-1$, with
$\widehat{N}_G^{(m,0)}= I_m$,
where the $2^m \times 2^m$ state matrices $X_m^+$, $X_m^-$, $O_m^+$ and $O_m^-$ are recursively obtained by  
$$ X_{k+1}^+ = \begin{bmatrix} X_k^+ & O_k^- \\ O_k^- & X_k^+ + O_k^- \end{bmatrix}, \ \  \ \ \
   X_{k+1}^- = \begin{bmatrix} X_k^- & O_k^+ \\ O_k^+ & X_k^- + O_k^+ \end{bmatrix}, $$
$$ O_{k+1}^+ = \begin{bmatrix} O_k^+ & X_k^- + O_k^+ \\
   X_k^- + O_k^+ &  X_k^- + 5 \, O_k^+ \end{bmatrix} \ \mbox{and} \ \
   O_{k+1}^- = \begin{bmatrix} O_k^- & X_k^+ + O_k^- \\
   X_k^+ + O_k^- & X_k^+ + 5 \, O_k^- \end{bmatrix} $$
for $k=0,1,\dots, m-1$, with
$X_0^+ = O_0^+ = \begin{bmatrix} 1 \end{bmatrix}$ and
$X_0^- = O_0^- = \begin{bmatrix} 0 \end{bmatrix}$.
\end{theorem}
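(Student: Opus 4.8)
The plan is to cut every graph $(m+2,n+2)$--mosaic into its \emph{interior}, the $m\times n$ subarray of tiles $M_{ij}$ with $2\le i\le m+1$, $2\le j\le n+1$, and its \emph{frame}, the $2m+2n+4$ remaining border tiles; suitable connectedness then amounts to matching connection points across each common edge, and ``no connection point on the boundary'' just forces the outer edges of the frame to be blank. I sort the $16^{(m+2)(n+2)}$ tile assignments by the pattern $P$ of connection points on the $2m+2n$ edges separating the interior from the frame. Since on the four edges of a single tile a prescribed connection-point pattern is realized by exactly one of the sixteen tiles -- except the all-four-edges pattern, realized by the five tiles $T_7,T_8,T_9,T_{10},T_{15}$, and a one-edge pattern, realized by none -- the number of graph mosaics inducing a given $P$ is the product of the number of fillings of the interior compatible with $P$, counted with a weight of $5$ for every cell carrying a connection point on all four of its edges, times the number of fillings of the frame compatible with $P$ and with blank outer edges. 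Encoding $P$ as a pair $(i,j)$ so that the bits of $i-1$ list the connection points along the left and top sides of the interior and the bits of $j-1$ those along its right and bottom sides, one gets $b(i,j)=c$ where $c$ is the total number of connection points of $P$, and the theorem splits into: the frame count for $P$ is $L_c$, and the weighted interior count for $(i,j)$ is the $(i,j)$--entry $x_{ij}$ of $\widehat N^{(m,n)}_G$.

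\emph{Frame count.} Every frame tile has its outer edge -- both outer edges, for the four corner tiles -- blank, so the frame uses only blank, single--arc and valence--$3$ tiles. Running around the frame as a cyclic sequence, a tile whose interface edge is blank must have its two cyclic edges both blank or both non-blank (a one-edge tile being impossible), while a tile whose interface edge carries a connection point must have a connection point on at least one of its two cyclic edges; in each admissible case the tile is then unique. Contracting the $2m+2n+4-c$ interface-blank tiles merges their cyclic edges and identifies the admissible frame fillings with the ways of choosing, among the $c$ cyclic edges joining consecutive interface-non-blank tiles, the set to leave blank so that no such tile meets two blank cyclic edges -- that is, a matching of the cycle $C_c$ -- and the number of matchings of $C_c$ (including the empty one) is the Lucas number $L_c$; the degenerate cases $c=0,1,2$ are checked directly against $L_0=2$, $L_1=1$, $L_2=3$.

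\emph{Interior count.} I build the interior column by column, taking $\widehat N^{(m,k)}_G$ to be the transfer matrix after $k$ columns: its rows are indexed by the connection points on the left side of the block together with those on the top edges of the first $k$ columns, its columns by the connection points on the current rightmost vertical cut together with those on the bottom edges of the first $k$ columns, and its $(p,q)$--entry counts the fillings of the first $k$ columns realizing this data, weighted by $5$ for each all-four-edges cell; thus $\widehat N^{(m,0)}_G=I_m$ and, after all $n$ columns, the $(i,j)$--entry is $x_{ij}$. Adjoining column $k+1$ multiplies $\widehat N^{(m,k)}_G$ on the right by a single-column transfer matrix acting on the $m$-bit right-cut part while appending one new bit to the row index and one to the column index (the connection points on the top and bottom edges of the new column); sorting by these two new bits produces exactly the displayed $2\times2$ block form, where the four single-column transfer matrices, one for each choice of (top edge, bottom edge) of the column, are $X_m^{+},X_m^{-},O_m^{+},O_m^{-}$ and the tensor factor $I_k$ carries the already-recorded bits unchanged. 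Each of $X_m^{\pm},O_m^{\pm}$ is in turn assembled tile by tile, stacking one column from the top downward: after $k$ tiles one keeps a $2^k\times2^k$ matrix recording the left and right edges of those tiles, with the letter $X$ or $O$ according as the lowest exposed edge is blank or not and the superscript $+$ or $-$ according as it agrees or disagrees with the fixed topmost edge of the column. The base case $k=0$ (the topmost and lowest edges coincide) gives $X_0^{+}=O_0^{+}=[1]$ and $X_0^{-}=O_0^{-}=[0]$; adjoining one tile below and running through the possibilities -- blank and straight tiles change nothing, the five all-four-edges tiles contribute the coefficient $5$ and change nothing, and the four corner tiles together with the two valence--$3$ tiles carrying a connection point on exactly one of their top and bottom edges flip the superscript -- produces the stated recursions for $X_m^{\pm},O_m^{\pm}$.

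Combining the two counts and summing over $(i,j)$ gives $D^{(m+2,n+2)}_G=\sum_{i,j}L_{b(i,j)}\,x_{ij}$. I expect the interior count, and within it the tile-by-tile recursion for $X_m^{\pm},O_m^{\pm}$, to be the crux: one must pin down the letter/superscript bookkeeping and the left--right symmetry of the two new index bits so that the case analysis -- the superscript flips, and the coefficient $5$ from the all-four-edges tiles -- matches the asserted block matrices on the nose, and then verify that the tensor-product structure $I_k\otimes(\cdot)$ threads these single-column matrices into the column-by-column recursion for $\widehat N^{(m,k)}_G$ correctly. By comparison, the interior/frame decomposition and the Lucas count for the frame are routine.
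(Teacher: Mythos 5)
Your proposal is correct and follows essentially the same route as the paper: the inner $m\times n$ block is counted by the column-by-column (and, within a column, tile-by-tile from the top) state-matrix recursion, and a boundary pattern with $c$ connection points admits exactly $L_c$ frame completions, your matchings-of-$C_c$ count being the same cycle combinatorics as the paper's bridged/unbridged recursion $F_t=F_{t-1}+F_{t-2}$. One bookkeeping slip to repair in the tile-by-tile sweep: the two trivalent tiles having connection points on both their top and bottom edges are missing from your list of cases (they leave both the letter and the superscript unchanged and supply the $O_k^{+}$, respectively $O_k^{-}$, terms in the off-diagonal blocks of $O_{k+1}^{+}$ and $O_{k+1}^{-}$), and the tiles with a connection point on exactly one of the top and bottom edges flip the letter $X\leftrightarrow O$ as well as the superscript, which is what makes $X^{+}$ pair with $O^{-}$ and $X^{-}$ with $O^{+}$ in the stated recursions.
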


Due to Theorem \ref{thm:graph}, we get Table \ref{tab1} of the first 8 terms
of the precise values of $D^{(n,n)}_G$.

\begin{table}[h]
\bgroup
\def\arraystretch{1.1} 
{\small 
\begin{tabular}{cl}      \hline \hline
\ \ \ $n$ \ \ \  & $D^{(n,n)}_G$  \\    \hline
1 & 1 \\
2 & 2 \\
3 & 71 \\
4 & 144212 \\
5 & 9899808106 \\
6 & 21965008855047380 \\
7 & 1573773836263642972028928 \\
8 & 3640808935014382048919715166814208 \ \ \ \\   \hline \hline
\end{tabular}
}
\egroup
\vspace{4mm}
\caption{List of $D^{(n,n)}_G$}
\label{tab1}
\end{table}

\section{State matrices $X_m^+$, $X_m^-$, $O_m^+$ and $O_m^-$}

The notation and terminology used in this paper will be the same as 
those employed in the previous paper~\cite{OHLL},
but differ partly to adjust into graph objects.
Let $m$ and $n$ be positive integers.
$\mathbb{S}^{(m,n)}_G$ denotes the set of all suitably connected $(m,n)$--mosaics,
which possibly have connection points on their boundary edges.
A suitably connected (5,3)--mosaic $S^{(5,3)}$ is drawn in Figure \ref{fig3}.

\begin{figure}[h]
\includegraphics{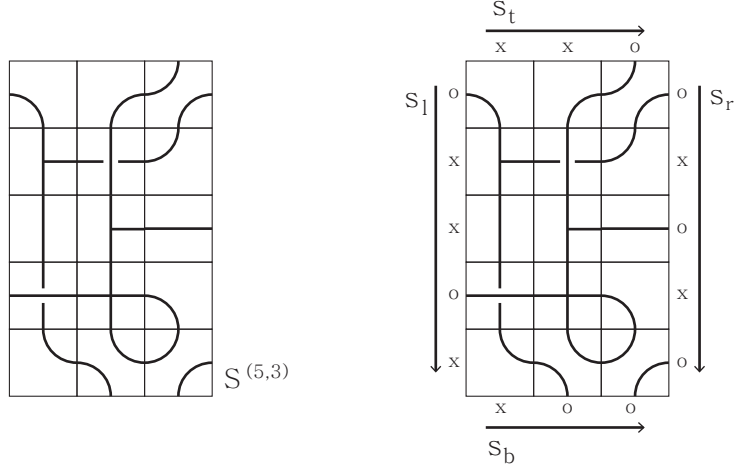}
\caption{Suitably connected (5,3)--mosaic $S^{(5,3)}$ with state indications}
\label{fig3}
\end{figure}

For simplicity of exposition, a graph mosaic tile is called $l$--, $r$--, $t$-- and $b$--{\em cp\/}, respectively,
if it has a connection point on its left, right, top and bottom boundaries.
We open use two or more letters such as $lt$--cp for the case of both $l$--cp and $t$--cp.
Also we use the sign $\hat{}$ \/ for negation so that, for example,
$\hat{l}$--cp means not $l$--cp, and
$\hat{l} \hat{t}$--cp means both $\hat{l}$--cp and $\hat{t}$--cp.
\vspace{3mm}

\noindent {\bf Choice rule for graph.\/}
Each $M_{ij}$ in a mosaic has five choices of the graph mosaic tiles
as $T_7$, $T_8$, $T_9$, $T_{10}$ and $T_{15}$ if it has four connection points (i.e. $lrtb$--cp).
It has a unique choice if it has no, two or three connection points.
It does not have exactly one connection point.
\vspace{3mm}

For a suitably connected $(m,n)$--mosaic $S^{(m,n)} =  (M_{ij})$ where $i=1,\dots,m$ and $j=1,\dots,n$,
an {\em $l$--state\/} of $S^{(m,n)}$ indicates the presence of connection points
on its boundary edge on the left,
and we denote that $s_l(S^{(m,n)}) = s_l(M_{11}) s_l(M_{21}) \cdots s_l(M_{m1})$
where $s_l(M_{ij})$ denotes ``x'' if $M_{ij}$ is $\hat{l}$--cp and ``o'' if $M_{ij}$ is $l$--cp.
Similarly we define $r$--, $t$-- and $b$--states of $S^{(m,n)}$, respectively,
that indicate the presences of connection points on its boundary edge
on the right, top and bottom.
As an example, the suitably connected $(5,3)$--mosaic $S^{(5,3)}$ drawn in Figure \ref{fig3} has
$s_l(S^{(5,3)}) =$ oxxox, $s_r(S^{(5,3)}) =$ oxoxo, $s_t(S^{(5,3)}) =$ xxo and $s_b(S^{(5,3)}) =$ xoo.
Obviously $\mathbb{S}^{(m,n)}_G$ has possibly $2^m$ kinds of $l$--states, $2^m$ kinds of $r$--states,
$2^n$ kinds of $t$--states and $2^n$ kinds of $b$--states.
Arrange the elements of the set of states in the reverse lexicographical order,
for example, xxx, oxx, xox, oox, xxo, oxo, xoo and ooo when $m=3$.

Now we define the {\em state matrices\/} $X_m^+$, $X_m^-$, $O_m^+$ and $O_m^-$ for $\mathbb{S}^{(m,1)}_G$.
$X_m^+$ ($X_m^-$, $O_m^+$ or $O_m^-$) is a $2^m \times 2^m$ matrix $(x_{ij})$
where each entry $x_{ij}$ indicates the number of all suitably connected $(m,1)$--mosaics
that have the $i$-th $l$--state and the $j$-th $r$--state in the set of $2^m$ states
of the order arranged above, 
and additionally whose ($b$--state, $t$--state) is (x, x) ((x, o), (o, o) or (o, x), respectively).
The letters $X$ and $O$ show their $b$--states and
we use the sign $\mbox{}^+$ (or $\mbox{}^-$)
when they have the same (different, respectively) $b$--state and $t$--state.
For a $2^{k+1} \times 2^{k+1}$ matrix $N = (x_{ij})$,
the 11--quadrant (similarly 12--, 21-- or 22--quadrant) of $N$ denotes
the $2^k \times 2^k$ quarter matrix $(x_{ij})$ where $1 \leq i, j \leq 2^k$
($1 \leq i \leq 2^k$ and $2^k+1 \leq j \leq 2^{k+1}$,
$2^k+1 \leq i \leq 2^{k+1}$ and $1 \leq j \leq 2^k$,
or $2^k+1 \leq i, j \leq 2^{k+1}$, respectively).

\begin{proposition} \label{prop:m1}
For the set $\mathbb{S}^{(m,1)}_G$ of all suitably connected $(m,1)$--mosaics,
the associated state matrices $X_m^+$, $X_m^-$, $O_m^+$ and $O_m^-$ are obtained by
$$ X_{k+1}^+ = \begin{bmatrix} X_k^+ & O_k^- \\ O_k^- & X_k^+ + O_k^- \end{bmatrix}, \ \  \ \ \
   X_{k+1}^- = \begin{bmatrix} X_k^- & O_k^+ \\ O_k^+ & X_k^- + O_k^+ \end{bmatrix}, $$
$$ O_{k+1}^+ = \begin{bmatrix} O_k^+ & X_k^- + O_k^+ \\
   X_k^- + O_k^+ &  X_k^- + 5 \, O_k^+ \end{bmatrix} \ \mbox{and} \ \
   O_{k+1}^- = \begin{bmatrix} O_k^- & X_k^+ + O_k^- \\
   X_k^+ + O_k^- & X_k^+ + 5 \, O_k^- \end{bmatrix} $$
for $k=1,2,\dots, m-1$, starting with
$$X_1^+ = \begin{bmatrix} 1 & 0 \\ 0 & 1 \end{bmatrix}, \
X_1^- = \begin{bmatrix} 0 & 1 \\ 1 & 1 \end{bmatrix}, \
O_1^+ = \begin{bmatrix} 1 & 1 \\ 1 & 5 \end{bmatrix} \mbox{and} \ \
O_1^- = \begin{bmatrix} 0 & 1 \\ 1 & 1 \end{bmatrix}.$$
\end{proposition}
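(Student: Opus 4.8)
The plan is to prove the recursion by induction on $k$, building an $(m{+}1,1)$--mosaic out of an $(m,1)$--mosaic by stacking one extra tile on the bottom (i.e. passing from $\mathbb{S}^{(m,1)}_G$ to $\mathbb{S}^{(m+1,1)}_G$). The key bookkeeping device is that the $2^{m+1}$ states of the taller mosaic split, via the reverse lexicographical order, into two blocks of size $2^m$: the first block consists of those states whose last letter (the one recording the new bottom tile $M_{m+1,1}$) is ``x'', the second block those ending in ``o''. This is exactly why each of $X_{k+1}^\pm, O_{k+1}^\pm$ is displayed as a $2\times 2$ array of $2^k\times 2^k$ blocks: the row block index records whether the added tile $M_{m+1,1}$ is $l$--cp, and the column block index whether it is $r$--cp. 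So an $(i,j)$--entry in block $(\alpha,\beta)$ counts suitably connected $(m{+}1,1)$--mosaics whose top $m$ rows form an $(m,1)$--mosaic with $l$--state $i$, $r$--state $j$, and prescribed overall $b$--/$t$--state, and whose bottom tile $M_{m+1,1}$ has $l$--cp iff $\alpha=2$ and $r$--cp iff $\beta=2$.

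Next I would carry out the case analysis tile by tile. The overall $t$--state of the $(m{+}1,1)$--mosaic is the $t$--state of its top $m$ rows, and its overall $b$--state is the $b$--state of the single bottom tile $M_{m+1,1}$; the crucial suitable-connectedness constraint is that the $b$--state of the top block must equal the $t$--state of $M_{m+1,1}$ on their shared horizontal edge. Fix which of $X_{k+1}^+, X_{k+1}^-, O_{k+1}^+, O_{k+1}^-$ we are computing; this fixes the pair (bottom-$b$, bottom-$t$) of $M_{m+1,1}$ and hence, by the Choice rule, \emph{which} tiles are legal for $M_{m+1,1}$ and how many of them realize each $(l\text{-cp?}, r\text{-cp?})$ pattern. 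For instance, for $X_{k+1}^+$ we need $M_{m+1,1}$ to have $b$--cp $=$ x and $t$--cp $=$ x; a tile with $\hat b\hat t$--cp and with a chosen $(l,r)$ pattern is $T_0$ (for $\hat l\hat r$), $T_1$-type (for $lr$), and is impossible for exactly-one-cp; the block whose top-mosaic $b$--state is forced to match $t(M_{m+1,1})=$ x then must itself be an $X$-type block, giving the entries $X_k^+$, $O_k^-$, $O_k^-$, $X_k^+ + O_k^-$ after splitting the top block's $b$--state into its own two sub-blocks (this is the source of the sums). The coefficient $5$ in the $O^\pm$ formulas is precisely the five tiles $T_7, T_8, T_9, T_{10}, T_{15}$ available when $M_{m+1,1}$ has all four connection points (the $lrtb$--cp case), which only arises in the $(2,2)$-block of $O_{k+1}^\pm$. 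The base case $m=1$ is a direct finite enumeration of the one-tile mosaics, matching the stated $X_1^\pm, O_1^\pm$.

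The step I expect to be the real obstacle — or at least the one demanding the most care — is getting the four recursions mutually consistent: because adding the bottom tile can flip the relation between $b$--state and $t$--state of the top block, the recursion for an $X^+$ matrix feeds on $O^-$ (and vice versa), and for $O^+$ on $X^-$ and $O^+$. One must track, for each of the four target matrices and each of the four $2\times 2$ block positions, exactly which sign combination of the recursively smaller matrices appears, and verify that the ``$+$'' signs in entries like $X_k^+ + O_k^-$ correctly record the union over the two possible $b$--states of the top block compatible with a single fixed $t$--state of $M_{m+1,1}$. Organizing this as a single table — rows indexed by the four matrices, columns by the four block positions, entries the legal tiles for $M_{m+1,1}$ together with the forced top-block type — and then reading off the four displayed formulas should make the verification routine once set up. I would also double-check the reverse-lexicographic ordering convention (xxx, oxx, xox, \dots) is used consistently so that ``block $\alpha=1$'' really does correspond to ``new letter $=$ x''; a sign or ordering slip here would transpose blocks and break the recursion.
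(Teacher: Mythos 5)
Your overall plan is the paper's own proof: induct on the number of rows, peel off the bottom tile $M_{m+1,1}$, use the reverse-lexicographic ordering so that the four $2^k\times 2^k$ quadrants correspond to the four $(l\text{-cp},r\text{-cp})$ patterns of that tile, impose the suitable-connectedness matching between the $t$-cp of $M_{m+1,1}$ and the $b$-state of the $k$ rows above it, and obtain the coefficient $5$ from the five $lrtb$-cp tiles $T_7,\dots,T_{10},T_{15}$ in the $22$-quadrant of $O_{k+1}^{\pm}$; the base case is the same finite enumeration (the paper does it at $m=1$ via its Figure 4 and Choice rule).

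However, one claim in your sketch is wrong as stated and would derail the case analysis if followed literally: fixing which of $X_{k+1}^{\pm}$, $O_{k+1}^{\pm}$ is being computed fixes only the $b$-cp of $M_{m+1,1}$ (that is the overall $b$-state) and the overall $t$-state, which is the $t$-state of the top $k$ rows --- it does \emph{not} fix the $t$-cp of $M_{m+1,1}$. That $t$-cp remains free (subject only to the Choice rule forbidding exactly one connection point), and summing over its two possible values, each of which forces a different type for the top block, is precisely what produces the mixed entries. In your $X_{k+1}^{+}$ example, insisting that $M_{m+1,1}$ be $\hat{t}$-cp would make the $12$- and $21$-quadrant entries vanish instead of being $O_k^-$: those entries come from $M_{m+1,1}$ being the corner tile with $t$-cp together with exactly one of $l$-/$r$-cp, which forces the top block to have ($b$-state, $t$-state) $=$ (o, x), i.e.\ type $O_k^-$; and in the $22$-quadrant the two subcases ($\hat{t}$-cp horizontal tile versus the $lrt$-cp valence-3 tile) give $X_k^+ + O_k^-$. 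With this corrected --- the rows/columns of the big blocks record $(l,r)$-cp of $M_{m+1,1}$, while the \emph{summands inside an entry} record its $t$-cp --- your table-based verification goes through and coincides with the paper's argument (its Table 2 for $O_{k+1}^+$).
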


Note that for the initial condition of this recursion formula,
we may start at $k=0$ with $X_0^+ = O_0^+ = \begin{bmatrix} 1 \end{bmatrix}$ and
$X_0^- = O_0^- = \begin{bmatrix} 0 \end{bmatrix}$.

\begin{proof}
We follow the proof of Proposition 2 in \cite{OHLL} with some extension.
Use induction on $m$.
First, we establish the four state matrices $X_1^+$, $X_1^-$, $O_1^+$ and $O_1^-$ 
for $(1,1)$--mosaics directly from Figure \ref{fig4}.
The entries 0, 1 and 5 are obtained by Choice rule for graph.
For example, $(2,2)$--entry of $O_1^+$ is 5 because the associated graph mosaic tiles must be $lrtb$--cp.
Note that the sum of all entries in these four matrices is
the total number of elements of $\mathbb{S}^{(1,1)}_G$ that is obviously 16.
Indeed, each element of $\mathbb{S}^{(1,1)}_G$ can be extended to
graph $(3,3)$--mosaics in several ways depending on the number of connection points
on its boundary edges.
This will be considered in Section 4.

\begin{figure}[h]
\includegraphics{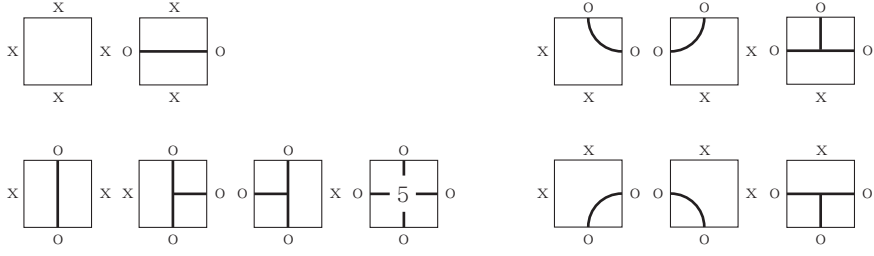}
\caption{$(1,1)$--mosaics for $X_1^+$(top-left), 
$X_1^-$(top-right), $O_1^+$(bottom-left) and $O_1^-$(bottom-right)}
\label{fig4}
\end{figure}

Assume that the state matrices $X_k^+$, $X_k^-$, $O_k^+$ and $O_k^-$ satisfy the statement.
We find $O_{k+1}^+$, and the readers can easily apply this method
to find $X_{k+1}^+$, $X_{k+1}^-$ and $O_{k+1}^-$.
All entries of $O_{k+1}^+$ count the suitably connected $(k+1,1)$--mosaics in $\mathbb{S}^{(k+1,1)}_G$ 
whose ($b$--state, $t$--state) is (o, o).
Let $S^{(k+1,1)} = (M_{i,1})$ be a suitably connected $(k \! + \! 1,1)$--mosaic.
Focus on the bottom graph mosaic tile $M_{k+1,1}$.
If it is $\hat{l} \hat{r}$--cp,
then $S^{(k+1,1)}$ should be counted in an entry of the 11--quadrant of $O_{k+1}^+$.
This is because of the reverse lexicographical order of $2^{k+1}$ states.
In this case, $M_{k+1,1}$ must be the graph mosaic tile $T_6$.
Let $S^{(k,1)}$ be the associated suitably connected $(k,1)$--mosaic
obtained from $S^{(k+1,1)}$ by deleting $M_{k+1,1}$.
Since ($b$--state, $t$--state) of $S^{(k,1)}$ is  (o, o),
the associated state matrix for all possible such $S^{(k,1)}$ is $O_{k}^+$.

If $M_{k+1,1}$ is $lr$--cp,
then $S^{(k+1,1)}$ should be counted in an entry of the 22--quadrant of $O_{k+1}^+$.
In this case $M_{k+1,1}$ must be one of 
the graph mosaic tiles $T_7$, $T_8$, $T_9$, $T_{10}$, $T_{13}$ or $T_{15}$.
In this case, ($b$--state, $t$--state) of $S^{(k,1)}$ is
either (o, x) when $M_{k+1,1}$ is $T_{13}$ or (o, o) for the other five graph mosaic tiles.
Thus the associated state matrix for all possible such $S^{(k,1)}$ is $X_k^- + 5 \, O_k^+$.
For the case that $M_{k+1,1}$ is either $\hat{l} r$--cp or $l \hat{r}$--cp, we follow similar arguments.
Note that Figure \ref{fig5} and Table \ref{tab2} below explain all four cases according to
the $l$--state and the $r$--state of $M_{k+1,1}$.
\end{proof}

\begin{figure}[h]
\includegraphics{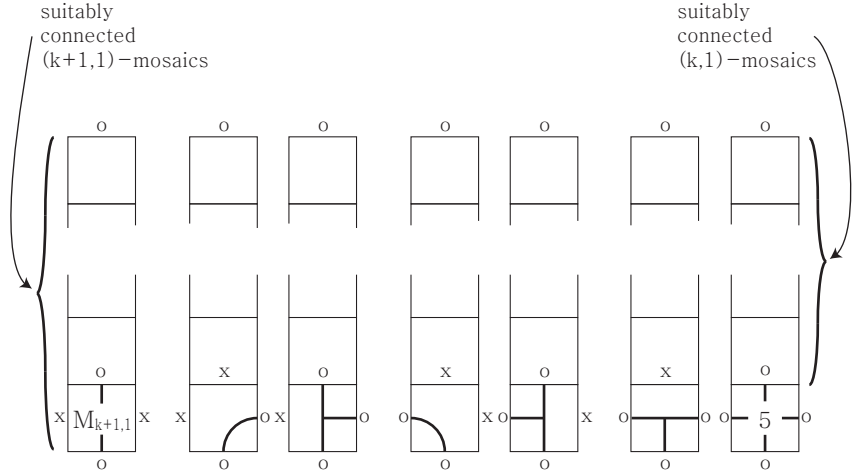}
\caption{Finding the matrix $O_{k+1}^+$}
\label{fig5}
\end{figure} 

\begin{table}[h]
\bgroup
\def\arraystretch{1.1} 
{\small 
\begin{tabular}{cccc}      \hline \hline
 & {\em Quadrants\/} & {\em Associated $M_{k+1,1}$\/} & \ {\em Submatrices\/} \ \\    \hline
\multirow{4}{7mm}{$O_{k+1}^+$}
 &  \ 11--quadrant ($\hat{l} \hat{r} b$--cp) \ & $T_6$ & $O_k^+$ \\
 & 12--quadrant ($\hat{l} r b$--cp) & $T_2$ or $T_{14}$ & $X_k^- + O_k^+$ \\
 & 21--quadrant ($l \hat{r} b$--cp) & $T_1$ or $T_{12}$ & $X_k^- + O_k^+$ \\
 & 22--quadrant ($l r b$--cp) & \ $T_{13}$ or $T_7 \sim T_{10}$, $T_{15}$ \ & $X_k^- + 5  O_k^+$ \\  \hline \hline
\end{tabular}
}
\egroup
\vspace{4mm}
\caption{Four quadrants of $O_{k+1}^+$}
\label{tab2}
\end{table}

\section{Magnified state matrix $\widehat{N}^{(m,n)}_G$}

For a suitably connected $(m,n)$--mosaic $S^{(m,n)} =  (M_{ij})$,
we define an {\em $lt$--state\/} of $S^{(m,n)}$ by $s_{lt}(S^{(m,n)}) = s_l(S^{(m,n)}) s_t(S^{(m,n)})$,
and similarly define an {\em $rb$--state\/} of $S^{(m,n)}$.
For example, $s_{lt}(S^{(5,3)}) =$ oxxoxxxo and $s_{rb}(S^{(5,3)}) =$ oxoxoxoo
for $S^{(5,3)}$ drawn in Figure \ref{fig3}.
Note that $\mathbb{S}^{(m,n)}_G$ has possibly $2^{m+n}$ kinds of $lt$--states
and $2^{m+n}$ kinds of $rb$--states.
We arrange the elements of the set of all states in the reverse lexicographical order
as before.

A {\em magnified state matrix\/} $\widehat{N}^{(m,n)}_G$ for $\mathbb{S}^{(m,n)}_G$  is
a $2^{m+n} \times 2^{m+n}$ matrix $(x_{ij})$
where each entry $x_{ij}$ is the number of all suitably connected $(m,n)$--mosaics
that have the $i$-th $lt$--state and the $j$-th $rb$--state in the set of $2^{m+n}$ states.

Like a checkerboard,
we divide $\widehat{N}^{(m,n)}_G$ into $2^{2n}$ submatrices $N^{(m,n)}_{uv}$, $u,v=1, \dots, 2^n$,
each of which indicates a $2^m \times 2^m$ state matrix for the set of
suitably connected $(m,n)$--mosaics that have the $u$-th $t$--state and the $v$-th $b$--state
in the set of $2^n$ states.
We write $\widehat{N}^{(m,n)}_G = [N^{(m,n)}_{uv}]$ as a $2^n \times 2^n$ matrix 
whose entries are also $2^m \times 2^m$ matrices.
All submatrices are related to all possible $2^{2n}$ choices of $t$--states and $b$--states.

\begin{proposition} \label{prop:mn}
For the set $\mathbb{S}^{(m,n)}_G$ of all suitably connected $(m,n)$--mosaics,
the associated magnified state matrix $\widehat{N}^{(m,n)}_G$ is obtained by
$$\widehat{N}^{(m,k+1)}_G =
\begin{bmatrix} \widehat{N}^{(m,k)}_G \cdot (I_k \otimes X_m^+) & 
\widehat{N}^{(m,k)}_G \cdot (I_k \otimes O_m^-) \\
\widehat{N}^{(m,k)}_G \cdot (I_k \otimes X_m^-) & 
\widehat{N}^{(m,k)}_G \cdot (I_k \otimes O_m^+) \end{bmatrix} $$
for $k=1,2,\dots, n \! - \! 1$, with
$\widehat{N}_G^{(m,1)}= \begin{bmatrix} X_m^+ & O_m^- \\ X_m^- & O_m^+ \end{bmatrix}$.
\end{proposition}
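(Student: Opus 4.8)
The plan is to induct on the number of columns, using throughout the checkerboard decomposition $\widehat{N}^{(m,n)}_G=[N^{(m,n)}_{uv}]$ and tracking how a suitably connected $(m,n\!+\!1)$--mosaic is assembled from its first $n$ columns together with its last column.

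First I would settle the base case $n=1$. By definition $N^{(m,1)}_{uv}$ counts the suitably connected $(m,1)$--mosaics with $u$--th $t$--state and $v$--th $b$--state, and for an $(m,1)$--mosaic the $t$--state and the $b$--state are single letters, listed as $\mathrm{x},\mathrm{o}$ in the reverse lexicographical order. Comparing this with the defining description of the state matrices (the letter records the $b$--state, the sign records whether the $b$-- and $t$--states agree) gives $N^{(m,1)}_{11}=X_m^+$, $N^{(m,1)}_{12}=O_m^-$, $N^{(m,1)}_{21}=X_m^-$ and $N^{(m,1)}_{22}=O_m^+$, that is, $\widehat{N}^{(m,1)}_G=\begin{bmatrix}X_m^+&O_m^-\\ X_m^-&O_m^+\end{bmatrix}$.

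For the inductive step, assuming the claim for $k$ columns, I would note that any suitably connected $(m,k\!+\!1)$--mosaic $S$ splits uniquely into its first $k$ columns $S'\in\mathbb{S}^{(m,k)}_G$ and its last column $S''\in\mathbb{S}^{(m,1)}_G$, and that conversely a pair $(S',S'')$ reassembles to such an $S$ exactly when $s_r(S')=s_l(S'')$, subject only to that matching condition; moreover $s_l(S)=s_l(S')$, $s_r(S)=s_r(S'')$, $s_t(S)=s_t(S')\,s_t(S'')$ and $s_b(S)=s_b(S')\,s_b(S'')$. Fixing the last letters $a=s_t(S'')$, $c=s_b(S'')$ and summing over the interface state $\sigma$ (an $m$--letter state forced to equal $s_r(S')=s_l(S'')$) then gives, for any prescribed $lt$-- and $rb$--state of $S$,
\[
(N^{(m,k+1)}_{uv})_{\lambda\rho}=\sum_{\sigma}(N^{(m,k)}_{u'v'})_{\lambda\sigma}\,Y_{\sigma\rho},
\]
where $\lambda=s_l(S)$, $\rho=s_r(S)$, the $t$-- and $b$--state indices $u,v$ of $S$ have truncated counterparts $u',v'$ for $S'$, and $Y$ is the $(m,1)$--state matrix for $b$--state $c$ and $t$--state $a$, namely $Y=X_m^+,O_m^-,X_m^-,O_m^+$ according as $(a,c)=(\mathrm{x},\mathrm{x}),(\mathrm{x},\mathrm{o}),(\mathrm{o},\mathrm{x}),(\mathrm{o},\mathrm{o})$. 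By the inductive hypothesis the first factor is an entry of $\widehat{N}^{(m,k)}_G$, so $N^{(m,k+1)}_{uv}=N^{(m,k)}_{u'v'}\cdot Y$ at the level of checkerboard blocks. Finally I would package these block identities: in the reverse lexicographical order of $(k\!+\!1)$--letter $t$--states the $2^k$ states ending in $\mathrm{x}$ come first (so $u'=u$) and the $2^k$ ending in $\mathrm{o}$ come next (so $u'=u-2^k$), and likewise for $b$--states and the column index, so $\widehat{N}^{(m,k+1)}_G$ is a $2\times 2$ array of $2^{m+k}\times 2^{m+k}$ blocks whose $(p,q)$ block collects those $N^{(m,k+1)}_{uv}=N^{(m,k)}_{u'v'}\cdot Y$ with $(s_t(S''),s_b(S''))$ prescribed by $(p,q)$; since $Y$ is then independent of $u',v'$, that block equals $[\,N^{(m,k)}_{u'v'}\cdot Y\,]=\widehat{N}^{(m,k)}_G\cdot(I_k\otimes Y)$, with $Y=X_m^+,O_m^-,X_m^-,O_m^+$ for $(p,q)=(1,1),(1,2),(2,1),(2,2)$, which is the asserted formula.

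The combinatorial content here --- gluing two mosaics along a shared edge contributes the product of their counts summed over the shared interface state --- is the standard transfer-matrix step and parallels the knot-mosaic case, so the main obstacle will be purely the ordering bookkeeping. I expect the hard part to be verifying carefully that the reverse lexicographical order on $(m,k\!+\!1)$--mosaic states factors as ``the last column's $t$-- and $b$--letters first, then the $(m,k)$--mosaic states'', that under this factorisation the four blocks of the $2\times 2$ array are indexed by $(s_t(S''),s_b(S''))$ in precisely the order $(1,1),(1,2),(2,1),(2,2)$, and that in the product $\widehat{N}^{(m,k)}_G\cdot(I_k\otimes Y)$ the block index of the right-hand factor --- major, and preserved by $I_k$ --- is exactly the $b$--state of $S'$ while the within-block index carries out the interface matching through $Y$. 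Once these conventions are pinned down, the rest of the argument is routine and the induction closes.
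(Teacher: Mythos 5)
Your proposal is correct and follows essentially the same route as the paper's proof: induction on the number of columns, identification of the four quadrants of $\widehat{N}^{(m,1)}_G$ with $X_m^+$, $O_m^-$, $X_m^-$, $O_m^+$ as the base case, splitting an $(m,k\!+\!1)$--mosaic into its first $k$ columns and its last column with the interface condition $s_r(S')=s_l(S'')$, summing over the interface state to get the block identity $N^{(m,k+1)}_{uv}=N^{(m,k)}_{u'v'}\cdot Y$, and packaging the blocks as $\widehat{N}^{(m,k)}_G\cdot(I_k\otimes Y)$ quadrant by quadrant. The ordering bookkeeping you flag at the end (last letter slowest in reverse lexicographical order, so the ending-in-x states come first and the truncated indices align) is exactly the point the paper also treats briefly, and your description of it is accurate.
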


Note that for the initial condition of this recursion formula,
we may start at $k=0$ with $\widehat{N}_G^{(m,0)}= \begin{bmatrix} I_m \end{bmatrix}$.

\begin{proof}
We use induction on $n$.
First, we establish the magnified state matrix $\widehat{N}^{(m,1)}_G$ for $\mathbb{S}^{(m,1)}_G$.
A suitably connected $(m,1)$--mosaic has two choices of $t$--state and $b$--state each among x or o.
These letters are related to the last letters of the words of the $lt$--state and the $rb$--state of the mosaic.
Therefore the 11--quadrant of $\widehat{N}^{(m,1)}_G$ counts only suitably connected $(m,1)$--mosaic
whose ($t$--state, $b$--state) is (x,x).
This means that it is $X_m^+$.
Similarly the 12--, 21-- and 22--quadrants of $\widehat{N}^{(m,1)}_G$ are $O_m^-$, $X_m^-$ and
$O_m^+$, respectively.

Assume that $\widehat{N}^{(m,k)}_G = [N^{(m,k)}_{uv}]$, $u,v=1, \dots, 2^k$, satisfies the statement.
We follow the proof of Proposition 3 in \cite{OHLL} with some extension.
Let $S^{(m,k+1)}$ be a suitably connected $(m,k \! + \! 1)$--mosaic in $\mathbb{S}^{(m,k+1)}_G$.
Also let $S^{(m,k)}$ be the suitably connected $(m,k)$--mosaic obtained by
deleting the rightmost column of $S^{(m,k+1)}$, and $S^{(m,1)}$ be the suitably connected $(m,1)$--mosaic
obtained by taking only the rightmost column of $S^{(m,k+1)}$.
Then the $r$--state of $S^{(m,k)}$ must be the same as the $l$--state of $S^{(m,1)}$ as shown in Figure \ref{fig6}.

\begin{figure}[h]
\includegraphics{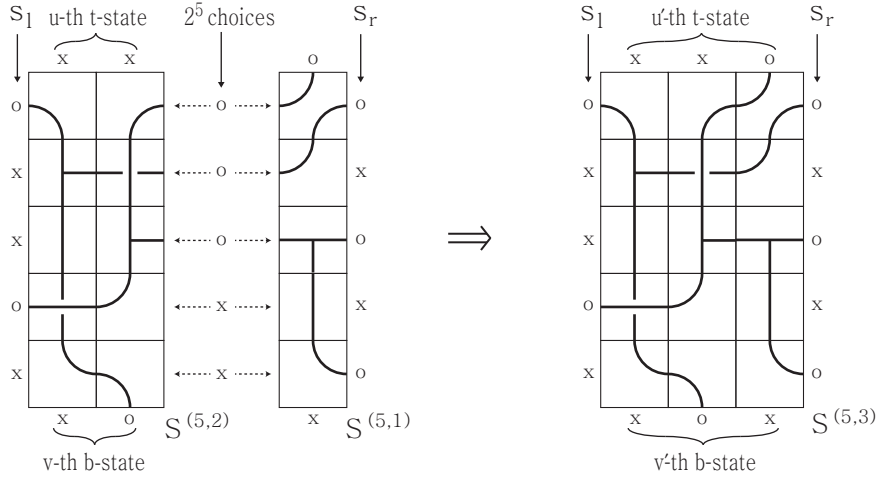}
\caption{Adjoining two suitably connected mosaics}
\label{fig6}
\end{figure}

Let $\widehat{N}^{(m,k+1)}_G = [N^{(m,k+1)}_{u'v'}]$ for $u',v'=1, \dots, 2^{k+1}$.
Now consider the 21--quadrant of $\widehat{N}^{(m,k+1)}_G$, for example.
Its entries are $2^{2k}$ submatrices $N^{(m,k+1)}_{u'v'}$
for $u'=2^k \! + \! 1, \dots, 2^{k+1}$ and $v'=1, \dots, 2^k$.
Associated suitably connected $(m,k+1)$--mosaics $S^{(m,k+1)}$ have the following property that
the last letters of their $lt$--states and $rb$--states are o and x, respectively.
This means that the $t$--state and the $b$--state of the rightmost column $S^{(m,1)}$ are o and x,
respectively as drawn in the figure.
Furthermore the associated $S^{(m,k)}$ must have the $u$-th $t$--state and the $v$-th $b$--state
where $u' = u + 2^k$ and $v' = v$ for this example.
This is because the $u$-th state and the $v$-th state are obtained from the words of
the $u'$-th state and the $v'$-th state by ignoring their last letters o and x, respectively.

Let $N^{(m,k+1)}_{u'v'} = (z_{ij})$, $N^{(m,k)}_{uv} = (y_{ij})$
and $X_m^- = (x_{ij})$ for $i,j = 1, \dots, 2^m$.
Among suitably connected $(m,k+1)$--mosaics counted in each entry $z_{ij}$,
the number of such mosaics whose $r$--state of the $k$-th column
(or equally $l$--state of the rightmost column) is the $q$-th state in the set of $2^m$ states
is the product of $y_{iq}$ and $x_{qj}$.
Since all $2^m$ states can be appeared as states of connection points
where $S^{(m,k)}$ and $S^{(m,1)}$ meet,
we get $z_{ij} = \sum^{2^m}_{q=1} y_{iq} x_{qj}$.
This implies that $N^{(m,k+1)}_{u'v'} = N^{(m,k)}_{uv} \, X_m^-$.
Therefore
$$ \mbox{21--quadrant of} \ \widehat{N}^{(m,k+1)}_G = 
\begin{bmatrix} N^{(m,k)}_{uv} \, X_m^- \end{bmatrix} =
\widehat{N}^{(m,k)}_G \cdot (I_k \otimes X_m^-). $$
Similarly we multiply $X_m^+$, $O_m^-$ and $O_m^+$ instead of $X_m^-$
for the 11--, 12-- and 22--quadrants of $\widehat{N}^{(m,k+1)}_G$, respectively.
We finish the proof.
\end{proof}

\section{Proof of Theorem \ref{thm:graph}}

Each suitably connected $(m,n)$--mosaic of $\mathbb{S}^{(m,n)}_G$ can be extended to
several graph $(m \! + \! 2,n \! + \! 2)$--mosaics by adjoining proper graph mosaic tiles surrounding it,
called boundary graph mosaic tiles, as illustrated in Figure \ref{fig7}.
Indeed the number of all possible extended graph mosaics
only depends on the number of connection points on the boundary edges.
For example, as drawn in the figure, there are exactly three ways to extend to graph mosaics
if a suitably connected mosaic has two connection points on the boundary.
This is a different aspect to knot mosaics,
for which there are always two ways to extend regardless of the number of connection points 
on the boundary, known as the two fold rule.

\begin{figure}[h]
\includegraphics{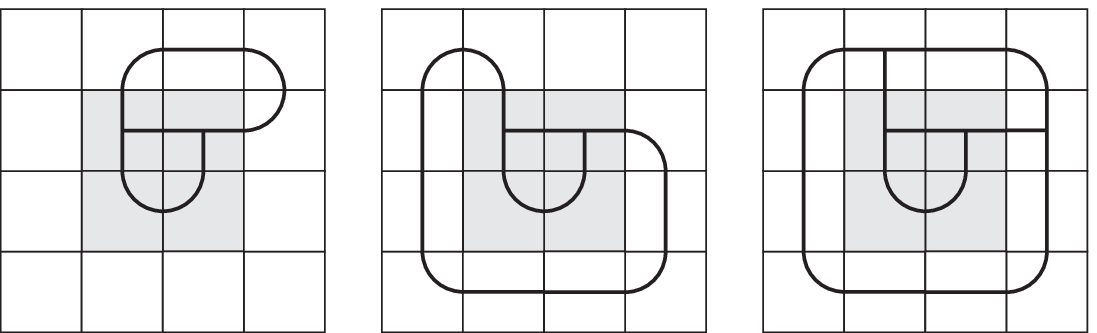}
\caption{Adjoining boundary graph mosaic tiles}
\label{fig7}
\end{figure}

Let $F_k$ be the number of choices of making graph $(m \! + \! 2,n \! + \! 2)$--mosaics
from a suitably connected $(m,n)$--mosaic that has $k$ connection points on the boundary.

\begin{proposition} \label{prop:pk}
$F_k$ is obtained by $$F_t=F_{t-1}+F_{t-2}$$
for $t=2,3,\dots, k$, starting with $F_0=2$, $F_1=1$.
\end{proposition}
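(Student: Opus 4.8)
The plan is to analyze how a suitably connected $(m,n)$--mosaic with $k$ connection points on its boundary can be surrounded by a ring of boundary graph mosaic tiles to form a graph $(m\!+\!2,n\!+\!2)$--mosaic, and to show that counting these extensions reduces to a one-dimensional problem along the boundary. First I would observe that each connection point on the boundary of the inner $(m,n)$--mosaic sits on an outward-pointing edge that must be matched by a connection point of the adjacent boundary tile; conversely, an edge with $\hat{}$--cp forces the adjacent boundary tile to have no connection point there. Since graph $(m\!+\!2,n\!+\!2)$--mosaics have no connection points on the outermost boundary, each boundary tile's three outward edges (or two, for corner tiles) must all be $\hat{}$--cp, and its inward edge(s) are dictated by the inner mosaic. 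By the Choice rule for graph, a boundary tile with a prescribed connection-point pattern having $0$, $2$, or $3$ cp's is uniquely determined, while one with $4$ cp's has five choices; but a corner tile can never have $4$ cp's and an edge tile has $4$ cp's only when both its inner-facing edges carry connection points. So the count of extensions decomposes as a product over "runs" of consecutive connection points as one traverses the boundary, where the contribution of a maximal run depends only on its length and on whether it straddles a corner.

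Next I would make this combinatorial bookkeeping precise. Walk around the boundary of the inner mosaic, recording the sequence of connection points on the outer-facing edges of its boundary tiles. The extending boundary tiles must reproduce this sequence on their inner edges while being $\hat{}$--cp on all outer edges. Two consecutive connection points on the inner mosaic that meet at a shared outer-boundary vertex can either be "joined" through a single boundary tile that is $lrtb$--cp there (one of the five tiles), or handled separately; this is exactly the Fibonacci-type branching. I would set up the recursion by examining the last connection point in a run: either it is absorbed into a tile together with its neighbor (contributing a factor tied to $F_{t-2}$ after the pair is resolved, with the five-choice freedom folded into the base case accounting), or it is resolved alone in a tile that is cp on exactly one outer-facing... no—rather, a tile that is cp on exactly the one inner edge, which is uniquely determined, leaving $F_{t-1}$ choices for the remaining run. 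This yields $F_t = F_{t-1} + F_{t-2}$ for $t \ge 2$.

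For the base cases, I would check directly from Figure~\ref{fig7} and the Choice rule: a mosaic with no connection points on the boundary ($k=0$) can be surrounded in exactly $F_0 = 2$ ways, corresponding to filling the boundary ring with either all empty tiles $T_0$ or the single closed-loop configuration; and a mosaic with exactly one boundary connection point ($k=1$) extends uniquely, $F_1 = 1$, since that lone connection point forces a determined chain of boundary tiles terminating in a cap, with no branching available. The case $k=2$ giving $F_2 = F_1 + F_0 = 3$ matches the statement in the text that "there are exactly three ways to extend to graph mosaics if a suitably connected mosaic has two connection points on the boundary," which is a useful sanity check.

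The main obstacle I anticipate is verifying that the branching is genuinely a clean two-way split at every step and that the five-fold choice at $lrtb$--cp boundary tiles is correctly localized—i.e., that it does not introduce extra factors of $5$ beyond what is already implicit, and that it only occurs in the specific configurations where two connection points of the inner mosaic are joined through one boundary tile. One must be careful that runs of connection points wrapping around corners of the mosaic do not behave differently from runs along a single side; I would argue that a corner boundary tile has only two outer edges and cannot be $lrtb$--cp, so a run passing a corner is, if anything, more constrained, but the recursion $F_t = F_{t-1} + F_{t-2}$ still governs it because the corner tile's behavior is subsumed in the "resolve alone" branch. Handling these boundary-versus-corner cases carefully, and confirming via small explicit examples that the resulting $F_k$ are independent of how the $k$ connection points are distributed around the boundary, is where the real work lies.
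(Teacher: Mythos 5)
Your proposal locates the branching in the wrong place, and this is a genuine gap rather than a cosmetic one. A tile of the surrounding ring can never be $lrtb$--cp: a non-corner ring tile has exactly one edge facing the inner mosaic, two edges facing its neighboring ring tiles, and one edge on the outer boundary of the $(m\!+\!2,n\!+\!2)$--mosaic, which must carry no connection point; a corner ring tile has two outer edges and none facing the inner mosaic. So ring tiles have at most three connection points, the five-fold choice rule never enters the extension count (if it did, factors of $5$ would appear and $F_2$ could not equal $3$), and two consecutive connection points of the inner mosaic are never ``joined through a single boundary tile,'' since distinct edges of the inner mosaic always face distinct ring tiles. Your earlier assertion that each ring tile's three non-inner edges must all be $\hat{}$--cp is also false and contradicts your own joining mechanism: the side edges between adjacent ring tiles are exactly where connecting arcs run. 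The correct model, which the paper sets up, is that for each of the $t$ cyclically consecutive pairs of connection points one decides whether the ring tiles between them carry a connecting arc (``bridged'') or are blank (``unbridged''), all tiles then being forced (the tile facing a connection point is an arc tile or a trivalent vertex tile $T_{11}$--$T_{14}$, never a cap --- there is no tile with a single connection point, which also invalidates your description of the $F_1$ case), subject to the constraint that two successive pairs cannot both be unbridged.

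Even granting a corrected model, your recursion argument is incomplete because you ignore the cyclic structure. The $t$ bridged/unbridged decisions sit on a cycle, which is precisely why $F_t$ is the Lucas number $L_t$ rather than a Fibonacci number, and a na\"{\i}ve ``resolve the last connection point alone or absorb it with its neighbor'' induction on a linear run does not immediately yield $F_t=F_{t-1}+F_{t-2}$: one must exhibit a bijection that respects the wrap-around constraint at the pair $\{t,1\}$. The paper does the real work here with a case comparison of the admissible configurations on the pairs $\{t\!-\!2,t\!-\!1\}$, $\{t\!-\!1,t\}$, $\{t,1\}$ against those for mosaics with $t\!-\!1$ and $t\!-\!2$ boundary connection points (Figure~\ref{fig9}); your proposal acknowledges this as ``where the real work lies'' but does not supply it. The base cases $F_0=2$ and $F_2=3$ you check are fine, but the mechanism you build the induction on would not produce a correct proof as written.
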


\begin{proof}
Let $S^{(m,n)}$ be a suitably connected $(m,n)$--mosaic
that has $t$ connection points on the boundary.
We name these connection points by $1, \dots, t$ clockwise on the boundary.
To get a new graph $(m \! + \! 2,n \! + \! 2)$--mosaic,
we adjoin $2m+2n+4$ boundary graph mosaic tiles surrounding $S^{(m,n)}$
so that the new one has to be also suitably connected and
does not have connection points on the new boundary.
As drawn in Figure \ref{fig8},
each pair of consecutive connection points, say $\{i, i \!+\! 1\}$
(we use 1 for $i \! + \! 1$ if $i=t$), is
either joined by an arc on the boundary tiles between them or not.
The former is called {\em bridged\/} and the latter {\em unbridged\/}.
The right figure illustrates simplified graphical symbols of arcs
on a portion of boundary graph mosaic tiles.
Note that any successive pairs $\{i \!-\! 1,i\}$ and $\{i,i \!+\! 1\}$ can not be unbridged simultaneously
because of suitable connectedness.

\begin{figure}[h]
\includegraphics{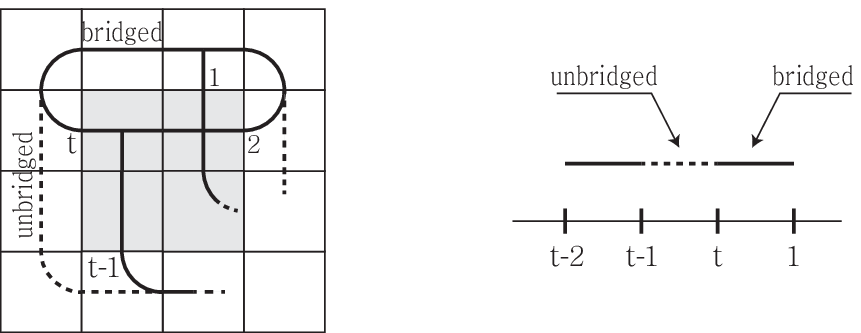}
\caption{Adjoining boundary graph mosaic tiles}
\label{fig8}
\end{figure}

$F_t$ is indeed the number of all possible choices of determining bridged or unbridged 
on $t$ pairs of consecutive connection points
with a restriction that any successive pairs can not be unbridged simultaneously.
The readers find that $F_0=2$, $F_1=1$, $F_2=3$ and $F_3=4$ by simple drawings
(Figure \ref{fig7} explains $F_2=3$).
Assume that $t \geq 4$ so that 1 and $t \! - \! 2$ are different.
Now we give a pictorial proof as illustrated in Figure \ref{fig9}.
We consider three situations of $S^{(m,n)}$ each of which has $t$, $t \! - \! 1$ or $t \! - \! 2$ connection points
on its boundary edges for counting $F_t$, $F_{t-1}$ and $F_{t-2}$, respectively.
The left five stacks in the figure indicate all five possibilities of bridged or unbridged
on three pairs $\{t \!-\! 2,t \!-\! 1\}$,  $\{t \!-\! 1,t\}$ and $\{t,1\}$ of consecutive connection points between $t-2$ and 1.
The middle three stacks and the right two stacks indicate all three and two possibilities of bridged or unbridged
on two pairs $\{t \!-\! 2,t \!-\! 1\}$ and $\{t \!-\! 1,1\}$, and a pair $\{t \!-\! 2,1\}$, respectively.

\begin{figure}[h]
\includegraphics{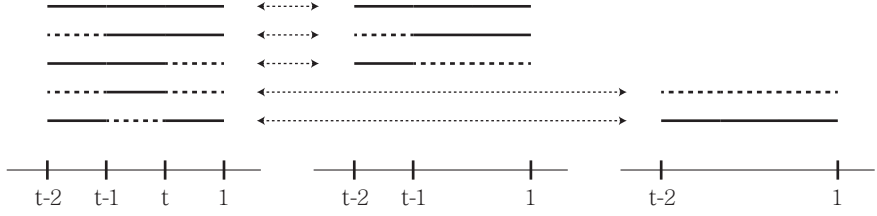}
\caption{$F_t=F_{t-1}+F_{t-2}$}
\label{fig9}
\end{figure}

Focus on the line segments indicating bridged or unbridged on the upper three stacks
of the left and the middle figures.
We notice that, in each line segment,
both pairs $\{t \!-\! 2,t \!-\! 1\}$'s for $F_t$ and $F_{t-1}$ are simultaneously either bridged  or unbridged,
and also both pairs $\{t ,1\}$ for $F_t$ and $\{t \!-\! 1,1\}$ for $F_{t-1}$ are 
simultaneously either bridged  or unbridged.
In each case, $F_t$ and $F_{t-1}$ involve the same number of choices of determining bridged or unbridged 
on their rest $t \! - \! 3$ pairs $\{1,2\}$ through $\{t \!-\! 3,t \!-\! 2\}$ extended properly.
This implies that the total number of choices associated with these upper three cases among $F_t$ choices 
is exactly the same as $F_{t-1}$.
Now consider the line segments on the bottom two stacks of the left and the right figures.
In both figures, two pairs $\{t \!-\! 2,t \!-\! 1\}$ and $\{t,1\}$ for $F_t$ and a pair $\{t \!-\! 2,1\}$ for $F_{t-2}$
are simultaneously either bridged or unbridged.
In each case we have the same number of choices determining bridged or unbridged on the rest pairs as before.
This implies that the total number of choices associated with these two cases among $F_t$ choices 
is the same as $F_{t-2}$.
Thus we obtain $F_t=F_{t-1}+F_{t-2}$.
\end{proof}

Note that $F_k$ generates the Lucas number $L_k=\frac{(1+\sqrt{5})^k+(1-\sqrt{5})^k}{2^k}$.

\begin{proof}[Proof of Theorem \ref{thm:graph}]
Let $m$ and $n$ be any positive integers.
Consider the set $\mathbb{S}^{(m,n)}_G$ of all suitably connected $(m,n)$--mosaics
and the associated magnified state matrix $\widehat{N}^{(m,n)}_G = (x_{ij})$, $i,j= 1, \dots, 2^{m+n}$,
obtained from Propositions~\ref{prop:m1} and \ref{prop:mn}.
By the definition of the magnified state matrix,
each entry $x_{ij}$ counts the number of all suitably connected $(m,n)$--mosaics
that have the $i$-th $lt$--state and the $j$-th $rb$--state in the set of all $2^{m+n}$ states.
Note that the total number of elements of $\mathbb{S}^{(m,n)}$ is
the sum of all entries of $\widehat{N}^{(m,n)}_G$
because all rows represent all $2^{m+n}$ $lt$--states
and all columns represent all $2^{m+n}$ $rb$--states of mosaics of $\mathbb{S}^{(m,n)}_G$.

From the definition of the order of states, the $i$-th state in the set of $2^{m+n}$ states is obtained as follows:
(1) take the binary numeral system of $i \! - \! 1$,
(2) replace the numbers 0 and 1 by the letters x and o, respectively,
(3) write the word in reverse,
and then (4) adjoin the letter x repeatedly at the end of the word
until the total number of the letters of the word is $m \! + \! n$.
Let $b(i,j)$ be the total number of letter 1's appeared in the binary numeral systems of $i \! - \! 1$ and $j \! - \! 1$.
This guarantees that a suitably connected mosaic
that has the $i$-th $lt$--state and the $j$-th $rb$--state has $b(i,j)$ connection points on the boundary.

Therefore each suitably connected mosaic of $\mathbb{S}^{(m,n)}$
that has the $i$-th $lt$--state and the $j$-th $rb$--state
can be extended to exactly $F_{b(i,j)}$ graph $(m \! + \! 2,n \! + \! 2)$--mosaics.
By Proposition \ref{prop:pk}, the total number $D^{(m+2,n+2)}_G$ of 
all graph $(m \! + \! 2,n \! + \! 2)$--mosaics is obtained by
$$D^{(m+2,n+2)}_G = \sum_{i, \, j=1}^{2^{m+n}} L_{b(i,j)} \, x_{ij}. $$
This completes the proof.
\end{proof}

\end{document}